\pdfoutput=1
\documentclass{amsart}
\usepackage{amsthm}
\usepackage{amsmath}
\usepackage{amscd}
\usepackage{amssymb}
\usepackage{graphicx}
\usepackage{hyperref}
\usepackage{tikz}

\theoremstyle{plain}
\newtheorem{theorem}{Theorem}
\newtheorem{proposition}{Proposition}

\newtheorem{fact}{Fact}

\theoremstyle{definition}
\newtheorem{definition}{Definition}

\theoremstyle{remark}
\newtheorem{remark}{Remark}

\newcommand{\rot}{\operatorname{rot}}

\newcommand{\atrb}{~
\,\begin{picture}(10,13)
\put(5,3){\circle{15}}
\put(0,-2){\vector(1,1){10}}
\put(10,-2){\vector(-1,1){10}}
\put(5,10.5){\vector(0,-1){15}}
\end{picture}~~~
}

\begin{document}
\title[Legendrian fronts]{New invariants of Legendrian knots}
\author{Noboru Ito}
\address{National Institute of Technology, Ibaraki College, 866 Nakane, Hitachinaka, Ibaraki, 312-8508, Japan}
\email{nito@gm.ibaraki-ct.ac.jp}
\author{Masashi Takamura}
\address{School of Social Informatics, Aoyama Gakuin University, 5-10-1, Fuchinobe, Chuo-ku, Sagamihara-shi, anagawa 252-5258, Japan
}
\email{takamura@si.aoyama.ac.jp}
\keywords{Legendran front; Legendrian knot; contact structure; chord diagram}
\thanks{MSC2020: 57R42, 57K33}
\date{December 4, 2021}

\begin{abstract}
We give new functions of Legendrian knots derived from Legendrian fronts.  These are integer-valued linear functions that are alike the Arnold basic invariant of plane curves.  Various  generalizations of the Arnold basic  invariant have been known.  
In this paper, we give another extension of Arnold's idea.  
\end{abstract}
\maketitle
\section{Introduction}
Arnold \cite{Arnold1994} introduced integer-valued functions $J^+$, $J^-$, and $St$ for \emph{plane curves}, each of which is the image of a generic immersion $S^1$ $\to$ $\mathbb{R}^2$,  where the self-intersections are transverse double points.  A plane curve is regarded as an object called a \emph{Legendrian front} that is an  image of a projection of a \emph{Legendrian knot}.  
Arnold showed that $J^+$ is a Legendrian knot invariant.  Nowadays, functions $J^+$, $J^-$, and $St$ are called \emph{Arnold $($basic$)$ invariants}.   
Arnold invariants are of much interest to researchers dealing with some aspects, and have been studied a lot: several versions of explicit formulae  of them (Polyak \cite{Polyak1998}, Shumakovitch \cite{Shumakovich1995}, and Viro \cite{Viro1996}), generalizations to fronts (Aicardi \cite{Aicardi1997}, and Arnold \cite{Arnold1994}, Polyak \cite{Polyak1998}), and higher-order cases: Arakawa-Ozawa \cite{ArakawaOzawa1999} for $St$, Goryunov \cite{Goryunov1997} for $J^+$, and Viro \cite{Viro1996} for $J^-$ with a setting that Arnold  invariants are of lower orders.  


Arnold showed his function $J^+$  
is a Legendrian knot invariant \cite{Arnold1994}.  In \cite{HayanoIto2015}, Hayano and one of the author NI gave Fact~\ref{fact0}.  In Fact~\ref{fact0}, a deformation of type strong RI\!I (RI\!I\!I,~resp.) is called \emph{iR2-move} (\emph{R3-move},~resp.).  From here, the terminologies of this paper obey \cite{HayanoIto2015}.  
\begin{fact}[Hayano-Ito \cite{HayanoIto2015}]\label{fact0}
Let $r$ be an integer and $j^+$ an even integer.  For a plane curve $C$, let $\rot (C)$ be the rotation number and $J^+ (C)$ the Arnold invariant.   
Then, for any pair  $(r, j^+)$, there exists an infinite family of plane curves $\{C_\lambda~|~\lambda \in \mathbb{N} \}$ satisfying the following conditions:
\begin{itemize}
\item $\rot (C_\lambda)=r$ and $J^+ (C_\lambda)$ $=$ $j^+$ for any $\lambda$.  
\item For any $\lambda$ and $\mu$, $C_\lambda$ and $C_{\mu}$ are not equivalent under iR2-move  and R3-move.   
\end{itemize}
\end{fact}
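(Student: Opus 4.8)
The plan is to pass to the Legendrian lift of a plane curve and to exploit the principle that an iR2-move and an R3-move, unlike a direct self-tangency, preserve the ambient isotopy type of that lift. To a generic plane curve $C$ one associates its canonical lift $\hat{C}$ into the unit cotangent bundle $ST^*\mathbb{R}^2 \cong \mathbb{R}^2 \times S^1$, recording at each point the oriented tangent direction; a transverse double point of $C$ lifts to two points with distinct fiber coordinates, so $\hat{C}$ is an embedded Legendrian knot, and both $\rot(C)$ and $J^+(C)$ are invariants of $\hat{C}$ up to Legendrian isotopy. The first step I would carry out is the observation that an iR2-move and an R3-move lift to isotopies: at the critical instant of an inverse self-tangency the two branches carry opposite tangent directions (fiber coordinates differing by $\pi$), and at a triple point the three branches carry three distinct directions, so in neither case does the lift acquire a self-intersection; a direct self-tangency, the move excluded from our relation, is exactly where two lifted branches collide. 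Hence if $C_\lambda$ and $C_\mu$ are equivalent under iR2-move and R3-move, then $\hat{C}_\lambda$ and $\hat{C}_\mu$ are isotopic knots, and it suffices to produce a family whose lifts are pairwise topologically distinct while $\rot$ and $J^+$ remain constant.

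Next I would fix a seed curve $C_*$ with $\rot(C_*)=r$ and $J^+(C_*)=j^+$, whose existence for every integer $r$ and every even $j^+$ is part of Arnold's realization results (start from a round circle and attach standard basic curves and figure-eight summands, each shifting $J^+$ by a controlled even amount and permitting independent adjustment of the rotation number). I would then build a local \emph{knotting gadget}, supported in a disk meeting $C_*$ in two arcs and inserted by a regular homotopy contributing zero net turning, so that $\rot$ is unchanged, and whose Legendrian lift performs a connected sum of $\hat{C}_*$ with a fixed nontrivial knot, say a trefoil. Inserting this gadget changes $J^+$ by some even integer $2m$; I would restore $J^+$ to $j^+$ by appending $|m|$ small direct self-tangency bigons of the appropriate sign. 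The point is that such a bigon changes the Legendrian refinement measured by $J^+$ without disturbing the topological knot type, because in $ST^*\mathbb{R}^2$ the swept strand can be pulled back across the other at distinct fiber coordinates, restoring the knot type even though $J^+$ is not restored. Defining $C_\lambda$ to be $C_*$ with $\lambda$ disjoint copies of the corrected gadget inserted along an embedded arc then gives $\rot(C_\lambda)=r$ and $J^+(C_\lambda)=j^+$ for every $\lambda$, establishing the first bullet.

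For the second bullet I would invoke additivity under connected sum of a classical knot invariant. Embedding the solid torus $ST^*\mathbb{R}^2$ standardly in $S^3$, the lift $\hat{C}_\lambda$ becomes the connected sum of $\hat{C}_*$ with $\lambda$ trefoils, so its Seifert genus equals $g(\hat{C}_*)+\lambda$ and grows strictly with $\lambda$. Therefore the knots $\hat{C}_\lambda$ are pairwise non-isotopic, and by the first paragraph $C_\lambda$ and $C_\mu$ cannot be related by iR2-move and R3-move whenever $\lambda \ne \mu$. This produces the required infinite family $\{C_\lambda \mid \lambda \in \mathbb{N}\}$.

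The main obstacle I anticipate is the construction of the knotting gadget itself, and specifically the demand that a \emph{cuspless} planar front tangle have a Legendrian lift realizing a nontrivial knotted summand: since the lift of an immersed plane curve is constrained to project to an immersion, with no tangencies to the $S^1$ fiber, one must check that a trefoil summand is genuinely attainable within this restricted class and is not secretly unknotted in $S^3$. Verifying the accompanying $J^+$ bookkeeping is routine by an explicit Gauss-diagram computation, but confirming the topological non-triviality of the inserted summand, and that the balancing bigons indeed leave the knot type untouched, is where the real care is needed.
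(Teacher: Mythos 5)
Your overall route is in fact the route the paper takes (following Hayano--Ito \cite{HayanoIto2015}): iR2- and R3-moves lift to ambient isotopies of the tangent lift in the unit tangent bundle, so curves whose lifts are topologically distinct knots cannot be related by these moves, and one then manufactures a family with constant $(\rot, J^+)$ whose lifts are distinguished by a classical knot invariant. Your soft steps are sound: the collision analysis at inverse self-tangencies and triple points is correct, and your claim that a small direct self-tangency bigon changes $J^+$ while leaving the topological type of the lift unchanged is also correct (the two new crossings of the lift form a cancelling R2 pair; equivalently, the bulged strand can be rotated away in the fiber direction and pulled back), which is consistent with $J^+$ being a Legendrian but not a topological invariant of the lift.

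The genuine gap is the one you flag yourself: the knotting gadget is never constructed, and it is not a peripheral detail --- it is the entire nontrivial content of the statement. That a \emph{cuspless} immersed plane-curve tangle can have a tangent lift realizing a nontrivial knot summand is exactly what must be exhibited, and nothing in your argument produces it; lift invariance, bigon triviality, and genus additivity are all soft by comparison. The paper fills precisely this hole with the explicit family $C(a,b,c)$ of Fig.~\ref{abc}: the repeated parts (b) and (c) are such gadgets, one has $J^+(C(a,b,c))=a-2b+2c$ and $\rot=a$, and the associated Legendrian knots are $(2,n)$-torus knots $T_n$ with $n$ growing along the family $\{C(r,b_0+k,c_0+k)\}_{k}$ (this is what the paper's proof of Theorem~\ref{main} exploits when it counts sub-diagrams in $T_n$), so the lifts are pairwise distinct while the invariants stay fixed. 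A second, smaller defect sits inside the same hole: there is no ``direct self-tangency bigon of the appropriate sign.'' Since $J^+-J^-=n$ and $J^-$ is unchanged by direct tangency, \emph{creating} a direct bigon always shifts $J^+$ by $+2$; your correction therefore runs only one way, and the gadget must be designed so that its $J^+$-shift is negative (as the clasp-like part (b), with coefficient $-2b$, indeed is). Both issues would be resolved by, and only by, the explicit construction you deferred.
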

In order to prove Fact~\ref{fact0}, we may use plane curves as in Fig.~\ref{abc} with the nonnegative rotation number $r$ ($=a$).  This example is similar to \cite{HayanoIto2015}.  This sequence $C(a, b, c)$ is parametrized by a tuple $(a, b, c)$ of integers $a$, $b$, and $c$.  Here, if $C(a, b, c)$ has the number of parts of type $(a)$ ($(b)$, $(c)$,~resp.) is $a_0$ ($b_0$, $c_0$,~resp.), we say that $C(a_0, b_0, c_0)$.   
Note that $J^+ (C(a, b, c))$ $=$ $a-2b+2c$.   
Further, it is proved that for fixed nonnegative integers $a_0$, $b_0$ and $c_0$ such that $a_0 -2b_0 + 2c_0=j^+$ and the rotation number $r$ ($=$ $a_0$), any two plane curves in the family $\{ C(r, b_0+k, c_0+k) \}_{k \in \mathbb{Z}_{>0}}$ are not equivalent under iR2-move and R3-move (cf.~\cite{HayanoIto2015}).    
\begin{figure}[h!]
\includegraphics[width=6cm]{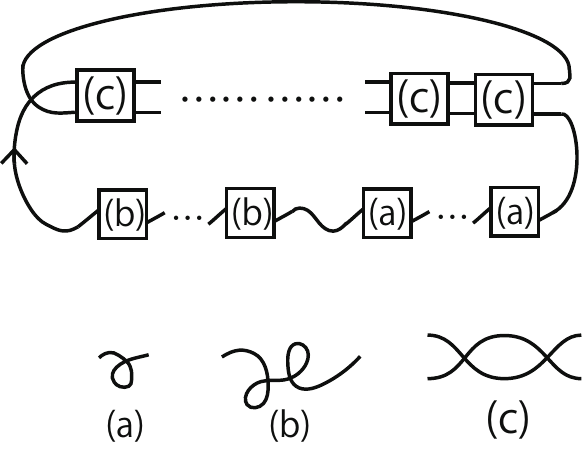}
\caption{$C(a, b, c)$ (upper) and parts (a), (b), and (c) (lower) with a  nonnegative rotation number.  The integer $a$ ($b$, $c$,~resp.) indicates the number of appearances of type $(a)$ ($(b)$, $(c)$,~resp.).  A plane curve $\dot{C}(a, b, c)$ is the corresponding $C(a, b, c)$ with the base point at the position on which the  arrow mark places.}\label{abc}
\end{figure}

Let $\dot{C}(a, b, c)$ be the plane curve $C(a, b, c)$ with the base point at the position on which the  arrow mark places as in Fig.~\ref{abc}.    Let $\dot{J}^{+}$ be the Arnold invariant of long curves, each of which is identified with a plane curve with a base point \cite[Definition~4.3]{Ito2010}.  
\begin{theorem}\label{main}
Let $r$ be a nonnegative integer, $j^+$ an integer, $\rot$ the rotation number, and $\dot{J}^+$ the Arnold invariant.  
There exist functions $I_{2, 3, k}$ $(1 \le k \le 5)$ that are invariant under iR2-move,  R3-move preserving the base point such that the function detects any two plane curves in the family $\{ \dot{C}(r, b_0+k, c_0+k) \}_{k \in \mathbb{Z}_{>0}}$ as in Fig.~\ref{abc}, where $\rot(\dot{C}(r, b_0+k, c_0+k))=r$ and $\dot{J}^+(\dot{C}(r, b_0+k, c_0+k))=j^+$.    
\end{theorem}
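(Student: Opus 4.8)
The plan is to realize each $I_{2,3,k}$ as a chord-diagram (Gauss-diagram) formula evaluated on the based front, in the spirit of the Polyak--Viro presentations of the Arnold invariants. First I would cut the front of $\dot{C}(a,b,c)$ at its base point to obtain a long front, and record its crossings as an arrowed chord diagram on an interval, where the arrow on each chord carries the co-orientation datum that the paper's chord-diagram notation is designed to track. The invariant is then the linear functional that counts, with integer weights $\alpha_i$, the sub-diagrams of this diagram isomorphic to the model diagrams indexed by $\irritwo$ and by the connected index set $\conni$ specialized to $(b,d)=(2,3)$; concretely each $I_{2,3,k}$ has the shape of the weighted sums $\FsumIrr$ and $\FsumConn$, with the weights $\alpha_i$ chosen differently for each of the five values of $k$.

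Second, I would verify invariance under the two generating moves. Each of the iR2-move and the R3-move changes the chord diagram only inside a bounded window, so it suffices to check that the window's contribution to every $I_{2,3,k}$ is unchanged. The iR2-move is a two-chord birth/death and cancels directly. The R3-move is the substantial case: I would enumerate the configurations of the three participating chords together with all admissible co-orientation arrows, and impose that the signed, $\alpha_i$-weighted count of each model sub-diagram is preserved across the move. This produces a finite linear system in the weights $\alpha_i$, and the five $I_{2,3,k}$ are to be read off as an independent set of its integer solutions. Because the theorem fixes the base point, I would also confirm that none of the cancellations requires transporting a chord endpoint past the marked point; the operations $\cyc$ and $\rev$ then serve only to normalize representatives, not to identify distinct long diagrams.

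Third, to prove detection, I would evaluate the tuple $\bigl(I_{2,3,k}(\dot{C}(r,b_0+k,c_0+k))\bigr)_{1\le k\le 5}$. Since the chord diagram of $\dot{C}(a,b,c)$ is a concatenation of the local blocks (a), (b), (c) of Fig.~\ref{abc}, every model sub-diagram count is a fixed polynomial in $(a,b,c)$, determined by how the model's chords distribute among the blocks and by the linking between blocks. I would compute these polynomials for the five chosen formulas and exhibit that, along the line $a=r$, $b=b_0+k$, $c=c_0+k$, at least one coordinate of the tuple is a strictly monotone function of $k$. Since $\rot$ and $\dot{J}^+=a-2b+2c$ are constant on this family, this separation is genuinely new information and proves the detection claim.

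The hardest step will be the R3-move balancing: unlike the single relation behind the classical $J^+$, the arrowed three-chord formulas must cancel simultaneously over every orientation and co-orientation class of the triple point, so the kernel of the R3-coboundary is cut out by many equations at once. The delicate point is to show this kernel is nonetheless large enough to contain five solutions whose values on the family are nonconstant, i.e.\ that imposing iR2- and R3-invariance does not force the detecting coefficient to vanish. I expect to settle this by writing out the R3 cancellation matrix over the $(b,d)=(2,3)$ block and checking that its corank leaves room for the five independent detecting functionals; once that is in place, the remaining work in the second and third steps is routine case-checking and polynomial bookkeeping.
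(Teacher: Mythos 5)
Your overall strategy coincides with the paper's: both construct the invariants as weighted counts of sub-diagrams of a based chord/arrow diagram, both obtain invariance by analyzing how iR2 and R3 change the diagram in a local window, and both detect the family $\{\dot{C}(r,b_0+k,c_0+k)\}_{k\in\mathbb{Z}_{>0}}$ by exhibiting a count that is strictly monotone in $k$. The difference lies in what is actually established. The paper does not redo the linear algebra: the construction and the invariance are imported wholesale from the relator machinery of \cite{ItoTakamura2021}, applied to positive knot diagrams derived from the plane curves (the explicit outputs being the formulas $I_{2,1}$ and $I_{3,i}$ listed in Section~5); and detection is done by counting the single sub-arrow diagram type $\atrb$, whose number in the diagram associated with a $(2,2m+1)$-torus knot is $\frac{1}{6}m(m+1)(2m+1)$, visibly strictly increasing in $m$, hence separating the members of the family.

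Measured against that, your proposal has a genuine gap exactly at the theorem's crux. The existence of even one functional that is simultaneously iR2- and R3-invariant \emph{and} nonconstant on the family is the entire content of the statement, and you leave it as ``write out the R3 cancellation matrix over the $(b,d)=(2,3)$ block and check that its corank leaves room,'' i.e., as an expectation about a computation you have not performed; nothing in your argument rules out that the invariance conditions force every solution to be constant on the family (a priori the kernel could collapse onto functions of $\rot$ and $\dot{J}^+$ alone, both of which are constant there). A complete proof must either carry out that computation or exhibit the formulas and verify them, which is what the appeal to \cite{ItoTakamura2021} accomplishes in the paper. Two secondary inaccuracies: iR2-invariance is not automatic (``cancels directly'') --- sub-diagrams containing \emph{both} chords created by the move also contribute, and killing their contribution is an additional linear condition on the weights; and your detection step defers the polynomial count to ``routine bookkeeping'' without verifying that the coefficient of the $k$-monotone term is nonzero, which is precisely what the paper's explicit count $\frac{1}{6}m(m+1)(2m+1)$ certifies.
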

\begin{remark}
For long curves, the Arnold invariant $\dot{J}^+$ is formulated by Gusein-Zade \cite{Gusein-ZadeNatanzon1997} and  Zhou-Zou-Pan  \cite{ZhouZouPan1998}, independently.       
\end{remark}
\section{Preliminaries}\label{sec_pre} 
\subsection{Legendrian knots and fronts}
The reader who is familiar with \cite{HayanoIto2015} may skip this section;  here we pick some  definitions in \cite{HayanoIto2015}.        
\begin{definition}
A plane curve is the image of a generic immersion $S^1$ $\to$ $\mathbb{R}^2$, where the self-intersections are transverse double points.  
\end{definition} 
\begin{definition}[Legendrian knot $K_C$ associated with a plane curve $C$]
Let $C$ be a plane curve. 
For an given oriented $C$, we take a generic immersion $f : S^1$ $\to$ $\mathbb{R}^2$ so that $f(S^1)=C$ and the orientation of $C$ is induced by that of $S^1$.  Let $df$ be the derivative $TS^1 \to T \mathbb{R}^2$.  Since $f$ is an immersion, $f$ implies that $df(p) \neq 0$ for every $p$.  Since there exists the projection $\pi : T \mathbb{R}^2 \setminus \mathbb{R}^2$ $\to$ $UT \mathbb{R}^2$, and the $0$-section of $T \mathbb{R}^2$ is identified with $\mathbb{R}^2$, the map $\pi \circ df|_{S^1} :$ $S^1$ $\to$ $UT\mathbb{R}^2$ gives a knot in $UT \mathbb{R}^2$.  This knot is denoted by $K_C$ and  called the Legendrian knot associated with $C$.      
\end{definition}
\begin{fact}
If two plane curves $C_0$ and $C_1$ are equivalent under iR2-move and R3-move, then there exists an ambient isotopy in $S^3$ that deforms $K_{C_0}$ to $K_{C_1}$ and keeps the $(+2)$-framed unknot fixed.  In particular, $K_{C_0}$ to $K_{C_1}$ are isotopic as framed knots in $S^3$.    
\end{fact}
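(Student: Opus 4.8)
The plan is to lift $C$ to its Legendrian knot $K_C$ inside a fixed copy of the unit tangent bundle $UT\mathbb{R}^2 = \mathbb{R}^2 \times S^1$ embedded in $S^3$, and then to show that each of the two permitted front moves is the trace of a Legendrian isotopy supported in a compact part of $UT\mathbb{R}^2$, hence of an ambient isotopy of $S^3$ that never touches the complementary solid torus. First I would fix this embedding. Compactifying the base, $\mathbb{R}^2 \hookrightarrow S^2$, identifies $UT\mathbb{R}^2$ with $UTS^2$ minus the fiber circle $S^1_\infty$ over the point at infinity. Since $UTS^2$ is the circle bundle over $S^2$ of Euler number $\chi(S^2) = 2$, it is diffeomorphic to $\mathbb{RP}^3$, which we present as $(+2)$-surgery on an unknot $U \subset S^3$; in this surgery description the surgery-dual knot is a fiber of the bundle, so it may be taken to be $S^1_\infty$. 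Deleting it recovers the complementary solid torus $S^3 \setminus \nu(U)$, giving an embedding $UT\mathbb{R}^2 \hookrightarrow S^3$ whose complement is a tubular neighborhood of the unknot $U$, with gluing slope recording the framing $+2$. This produces the $(+2)$-framed unknot of the statement and places every $K_C$ in its complement.

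Next I would verify that iR2-move and R3-move lift to isotopies inside $UT\mathbb{R}^2$. A transverse double point of $C$ lifts to two distinct points of the fiber because the two oriented tangent directions differ, so $K_C$ is embedded to begin with. In an R3-move only transverse triple points occur, and the three pairwise-distinct tangent directions keep the three lifted strands apart throughout the move, which is therefore covered by an embedded isotopy of $S^1$ in $UT\mathbb{R}^2$. The essential feature of the strong RII (iR2) is that at the tangency instant the two arcs carry opposite oriented tangent directions, i.e.\ antipodal fiber coordinates; hence the lift does not develop a self-tangency and the entire move is again covered by an embedded isotopy. This is precisely what fails for the excluded weak RII, where the tangent directions agree and the lift would cross itself. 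By the isotopy extension theorem each such isotopy of the circle extends to a compactly supported ambient isotopy of $UT\mathbb{R}^2$, which I extend by the identity over the rest of $S^3$; the resulting ambient isotopy fixes $\nu(U)$ pointwise, hence fixes $U$ together with its $+2$-framing.

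The conclusion then follows by concatenation. Since $C_0$ and $C_1$ are related by a finite sequence of iR2- and R3-moves, composing the corresponding ambient isotopies yields a single ambient isotopy of $S^3$ carrying $K_{C_0}$ to $K_{C_1}$ while fixing the $(+2)$-framed unknot at every stage. For the framed statement I note that each elementary step is realized by a Legendrian isotopy of the lift, so it preserves the framing of $K_C$ coming from the product structure of $UT\mathbb{R}^2$ (equivalently the contact framing); the composite diffeomorphism therefore matches the framing of $K_{C_0}$ with that of $K_{C_1}$, and the two are isotopic as framed knots in $S^3$.

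The main obstacle is the framing bookkeeping. Showing that the complementary unknot carries exactly the framing $+2$, and not $-2$, forces one to fix orientation conventions consistently across the bundle $UTS^2 \to S^2$, the surgery slope, and the orientation of $S^2$; a clean way to pin this down is to compute in a local model, lifting a single positively oriented convex circle in $\mathbb{R}^2$ and reading off the slope of its lift on the boundary torus $\partial \nu(U)$. The companion point requiring care is the antipodal-direction claim for iR2: one must check in an explicit local parametrization that the two coincident tangent \emph{lines} at the bigon tangency lift to antipodal, hence distinct, points of the fiber, so that embeddedness of the lift is genuinely maintained rather than merely plausible.
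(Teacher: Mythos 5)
The paper itself gives no proof of this Fact: it is quoted verbatim from Hayano--Ito \cite{HayanoIto2015}, whose argument is exactly the one you reconstruct (compactify $UT\mathbb{R}^2$ to $UTS^2\cong\mathbb{RP}^3$, realized as $(+2)$-surgery on an unknot with the dual knot a fiber, so that $UT\mathbb{R}^2$ embeds in $S^3$ as the complement of the $(+2)$-framed unknot, and then observe that inverse self-tangency and triple-point moves lift to embedded Legendrian isotopies, extended ambiently by isotopy extension). Your proposal is correct and takes essentially the same route, including correctly identifying the antipodal-fiber-coordinate point as the reason the iR2-move (but not the weak RII) lifts without self-intersection; the convention checks you flag (Euler number versus surgery slope sign, open versus closed solid torus) are precisely the details fixed in \cite{HayanoIto2015}.
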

\section{Explicit relationship between plane curves and Legendrian knots} 
The realization of the Legendrian knot \cite{HayanoIto2015} gives one to one correspondence  between an iR2 (R3,~resp.)-move of plane curves and the Reidemeister move $\Omega_2$ ($\Omega_3$,~resp.) as in Fig.~\ref{ir2}  (Fig.~\ref{r3},~resp.) if the tangent vector of a branch of a self-tangency (triple point crossing,~resp.) is \emph{not} a horizontal direction vector oriented from right to left ($\ast$).  Note that the condition ($\ast$) can be excluded from the neighborhood of a digon (triangle,~resp.) by small isotopy for plane curves and for knots,  respectively. 
\begin{figure}[h!]
\includegraphics[width=6cm]{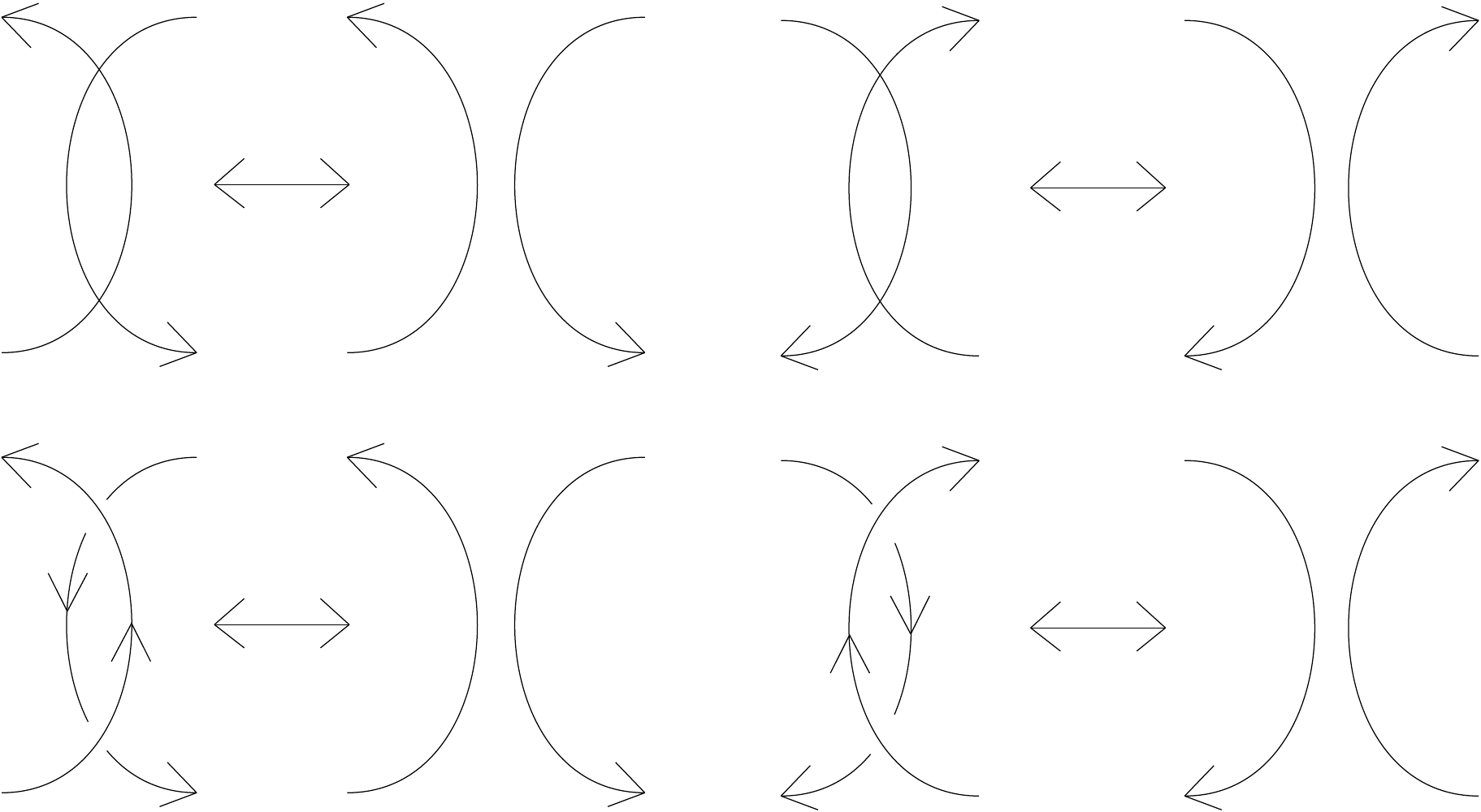}
\caption{iR2-moves}\label{ir2}
\end{figure}       
\begin{figure}[h!]
\includegraphics[width=6cm]{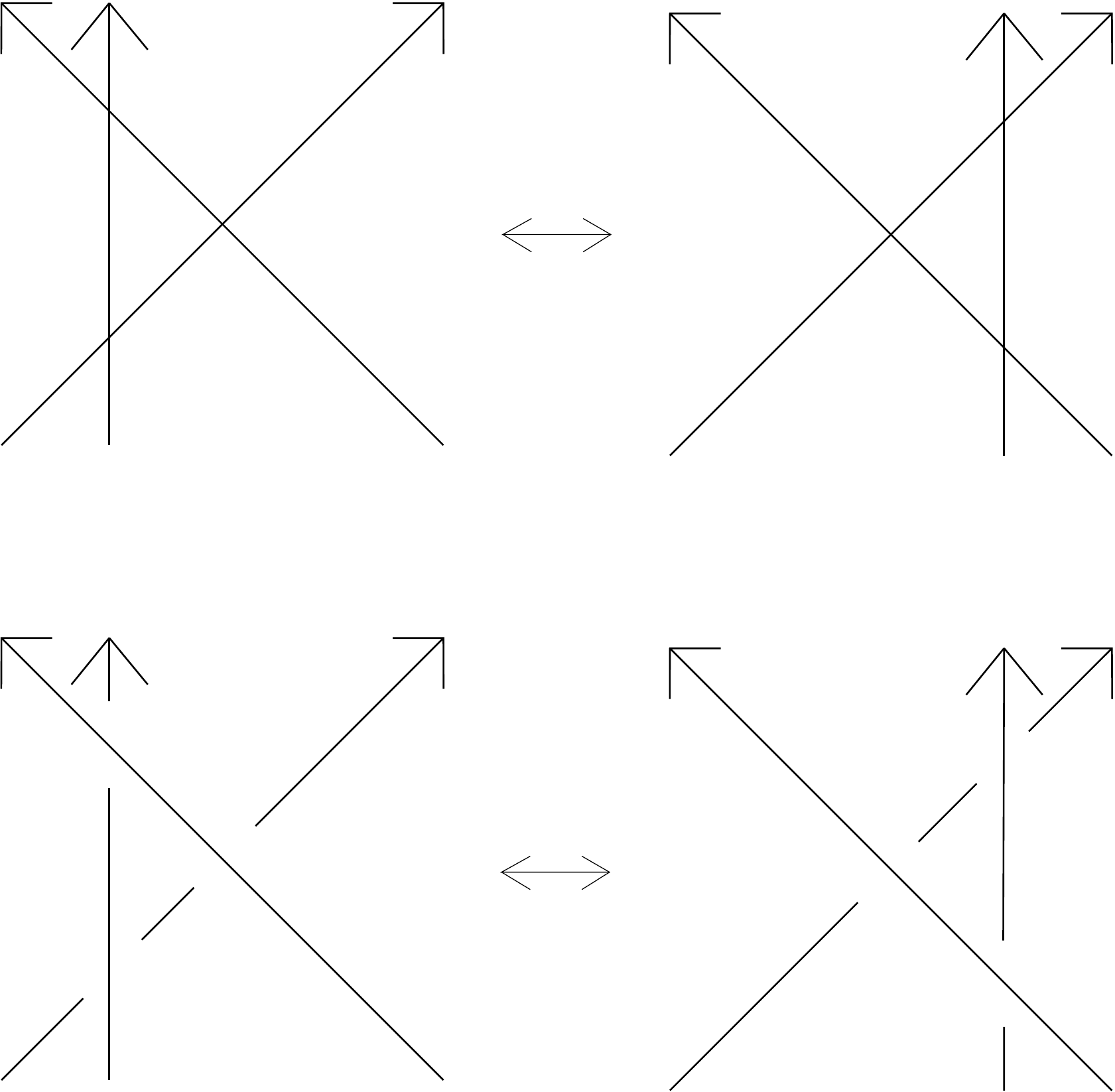}
\caption{An examples of R3-move}\label{r3}
\end{figure}
Given a plane curve $C$, we have a diagram $D_{K_C}$ of a knot $K_C$ keeping the $(+2)$-framed unknot fixed  \cite[Section~3.2 just after Prop.~3.2 to Prop.~3.4]{HayanoIto2015} (Fig.~\ref{LGtoAD}).   
\begin{figure}[h!]
\includegraphics[width=6cm]{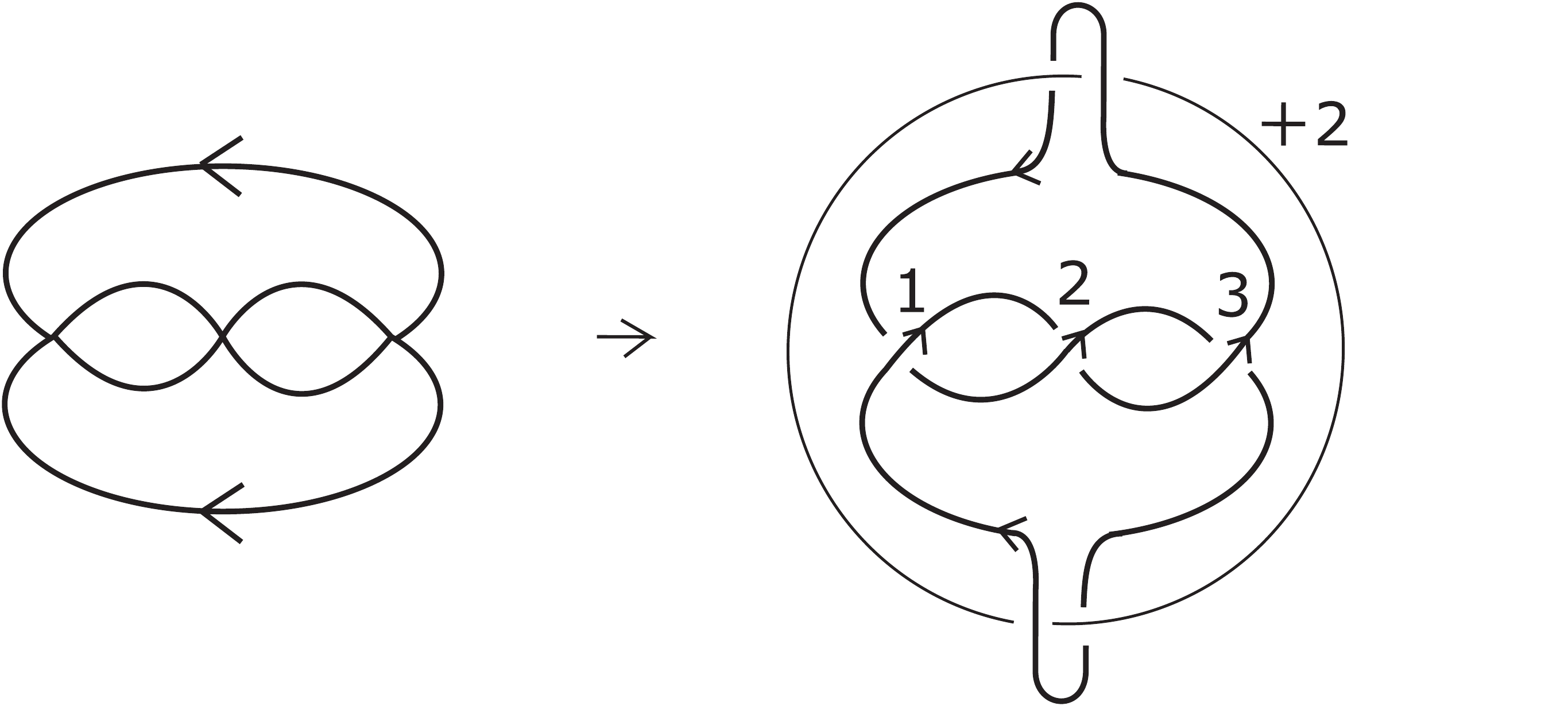}
\caption{An example $K_C$ (right) of  Legendrian knots derived from a plane curve $C$ (left)}\label{LGtoAD}
\end{figure}
\section{Proof of Theorem~\ref{main}}
We apply the same argument as \cite{ItoTakamura2021} with  relators \cite[Section~2.2, Definition~7]{ItoTakamura2021}.  
Although the meaning of ``the same argument" may be clear, we pose some comments.  

\noindent(Construction) 
First of all, if the reader does not know the definition of Gauss diagrams, see, e.g., \cite[Page~1046, Fig.~1]{GoussarovPolyakViro2000}). 
Second, though we have a Legendrian knot diagram $D_{K_C}$ from $K_C$, we use relators \cite[Section~2.2, Definition~7]{ItoTakamura2021} given by  positive knot diagrams derived from plane curves.  Third, we apply the same argument as \cite{ItoTakamura2021} to these relators.  Then we have invariants as in Theorem~\ref{main}.  

\noindent(Detection of Legendrian knots) 
Essentially, we count sub-arrow diagrams, each of which is isomorphic to $\atrb$.  
For every $(2, n)$-torus knot $T_n$ such that $n=2m+1$, the number of sub-arrow diagrams of type $\atrb$ is 
\[
\frac{1}{6} m (m+1) (2m+1).  
\]
Then the invariant detects two $T_n$ and $T_{n'}$ corresponding to $K_C$ and $K_{C'}$ respectively.  
\hfill$\Box$
\section{New functions of Legendrian fronts}
Since we give our formulas with simpler presentations, we slightly change the notation as in \cite{ItoTakamura2021}.  More precisely, we switch each arrow presentation to the signed chord in the way of Fig.~\ref{switch}.  
\begin{figure}[h!]
\includegraphics[width=6cm]{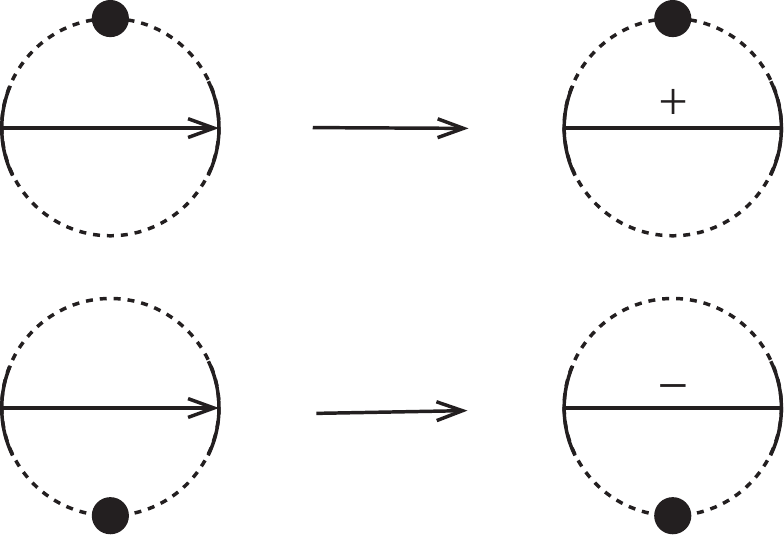}
\caption{Switching each arrow presentation to the signed chord.}\label{switch}
\end{figure}

In the following, we list six  invariants $I_{2, 1}$ and $I_{3, i}(C)$ ($1 \le i \le 5$) for a given plane curve $C$:    

\begin{align*}
I_{3, 1} &:= \begin{tikzpicture}[baseline=0pt]
\draw (0pt, 0pt) circle (7pt);
\draw (7pt, 0pt)--(3.5pt, 6.06218pt);
\draw (-3.5pt, 6.06218pt)--(-7pt, 0pt);
\draw (-3.5pt, -6.06218pt)--(3.5pt, -6.06218pt);
\fill (6.06218pt, -3.5pt) circle (1pt);
\end{tikzpicture}
+
\begin{tikzpicture}[baseline=0pt]
\draw (0pt, 0pt) circle (7pt);
\draw (7pt, 0pt)--(3.5pt, -6.06218pt);
\draw (3.5pt, 6.06218pt)--(-3.5pt, 6.06218pt);
\draw (-7pt, 0pt)--(-3.5pt, -6.06218pt);
\fill (6.06218pt, -3.5pt) circle (1pt);
\end{tikzpicture}
-
\begin{tikzpicture}[baseline=0pt]
\draw (0pt, 0pt) circle (7pt);
\draw (7pt, 0pt)--(3.5pt, 6.06218pt);
\draw (-3.5pt, 6.06218pt)--(-3.5pt, -6.06218pt);
\draw (-7pt, 0pt)--(3.5pt, -6.06218pt);
\fill (6.06218pt, -3.5pt) circle (1pt);
\end{tikzpicture}
-
\begin{tikzpicture}[baseline=0pt]
\draw (0pt, 0pt) circle (7pt);
\draw (7pt, 0pt)--(3.5pt, -6.06218pt);
\draw (3.5pt, 6.06218pt)--(-7pt, 0pt);
\draw (-3.5pt, 6.06218pt)--(-3.5pt, -6.06218pt);
\fill (6.06218pt, -3.5pt) circle (1pt);
\end{tikzpicture}
-
\begin{tikzpicture}[baseline=0pt]
\draw (0pt, 0pt) circle (7pt);
\draw (7pt, 0pt)--(-3.5pt, 6.06218pt);
\draw (3.5pt, 6.06218pt)--(-7pt, 0pt);
\draw (-3.5pt, -6.06218pt)--(3.5pt, -6.06218pt);
\fill (6.06218pt, -3.5pt) circle (1pt);
\end{tikzpicture}
+
\begin{tikzpicture}[baseline=0pt]
\draw (0pt, 0pt) circle (7pt);
\draw (7pt, 0pt)--(-3.5pt, -6.06218pt);
\draw (3.5pt, 6.06218pt)--(3.5pt, -6.06218pt);
\draw (-3.5pt, 6.06218pt)--(-7pt, 0pt);
\fill (6.06218pt, -3.5pt) circle (1pt);
\end{tikzpicture}
-
\begin{tikzpicture}[baseline=0pt]
\draw (0pt, 0pt) circle (7pt);
\draw (7pt, 0pt)--(-3.5pt, 6.06218pt);
\draw (3.5pt, 6.06218pt)--(-3.5pt, -6.06218pt);
\draw (-7pt, 0pt)--(3.5pt, -6.06218pt);
\fill (6.06218pt, -3.5pt) circle (1pt);
\end{tikzpicture}
+
\begin{tikzpicture}[baseline=0pt]
\draw (0pt, 0pt) circle (7pt);
\draw (7pt, 0pt)--(-7pt, 0pt);
\draw (3.5pt, 6.06218pt)--(-3.5pt, -6.06218pt);
\draw (-3.5pt, 6.06218pt)--(3.5pt, -6.06218pt);
\fill (6.06218pt, -3.5pt) circle (1pt);
\end{tikzpicture}
~, \\
I_{3, 2} &:= \begin{tikzpicture}[baseline=0pt]
\draw (0pt, 0pt) circle (7pt);
\draw (7pt, 0pt)--(-7pt, 0pt);
\draw (0pt, 7pt)--(0pt, -7pt);
\fill (4.94975pt, -4.94975pt) circle (1pt);
\draw[font=\tiny] (10.5pt, 0pt) node {\mbox{$-$}};
\draw[font=\tiny] (0pt, 10.5pt) node {\mbox{$+$}};
\end{tikzpicture}
-
\begin{tikzpicture}[baseline=0pt]
\draw (0pt, 0pt) circle (7pt);
\draw (7pt, 0pt)--(-7pt, 0pt);
\draw (0pt, 7pt)--(0pt, -7pt);
\fill (4.94975pt, -4.94975pt) circle (1pt);
\draw[font=\tiny] (10.5pt, 0pt) node {\mbox{$+$}};
\draw[font=\tiny] (0pt, 10.5pt) node {\mbox{$-$}};
\end{tikzpicture}
-
\begin{tikzpicture}[baseline=0pt]
\draw (0pt, 0pt) circle (7pt);
\draw (7pt, 0pt)--(-3.5pt, 6.06218pt);
\draw (3.5pt, 6.06218pt)--(3.5pt, -6.06218pt);
\draw (-7pt, 0pt)--(-3.5pt, -6.06218pt);
\fill (6.06218pt, -3.5pt) circle (1pt);
\end{tikzpicture}
+
\begin{tikzpicture}[baseline=0pt]
\draw (0pt, 0pt) circle (7pt);
\draw (7pt, 0pt)--(-3.5pt, -6.06218pt);
\draw (3.5pt, 6.06218pt)--(-3.5pt, 6.06218pt);
\draw (-7pt, 0pt)--(3.5pt, -6.06218pt);
\fill (6.06218pt, -3.5pt) circle (1pt);
\end{tikzpicture}
+
\begin{tikzpicture}[baseline=0pt]
\draw (0pt, 0pt) circle (7pt);
\draw (7pt, 0pt)--(-7pt, 0pt);
\draw (3.5pt, 6.06218pt)--(3.5pt, -6.06218pt);
\draw (-3.5pt, 6.06218pt)--(-3.5pt, -6.06218pt);
\fill (6.06218pt, -3.5pt) circle (1pt);
\end{tikzpicture}
-
\begin{tikzpicture}[baseline=0pt]
\draw (0pt, 0pt) circle (7pt);
\draw (7pt, 0pt)--(-3.5pt, -6.06218pt);
\draw (3.5pt, 6.06218pt)--(-7pt, 0pt);
\draw (-3.5pt, 6.06218pt)--(3.5pt, -6.06218pt);
\fill (6.06218pt, -3.5pt) circle (1pt);
\end{tikzpicture}
~,  \\
I_{3, 3} &:= \begin{tikzpicture}[baseline=0pt]
\draw (0pt, 0pt) circle (7pt);
\draw (7pt, 0pt)--(-7pt, 0pt);
\draw (0pt, 7pt)--(0pt, -7pt);
\fill (4.94975pt, -4.94975pt) circle (1pt);
\draw[font=\tiny] (10.5pt, 0pt) node {\mbox{$-$}};
\draw[font=\tiny] (0pt, 10.5pt) node {\mbox{$-$}};
\end{tikzpicture}
-
\begin{tikzpicture}[baseline=0pt]
\draw (0pt, 0pt) circle (7pt);
\draw (7pt, 0pt)--(-7pt, 0pt);
\draw (0pt, 7pt)--(0pt, -7pt);
\fill (4.94975pt, -4.94975pt) circle (1pt);
\draw[font=\tiny] (10.5pt, 0pt) node {\mbox{$+$}};
\draw[font=\tiny] (0pt, 10.5pt) node {\mbox{$+$}};
\end{tikzpicture}
+
\begin{tikzpicture}[baseline=0pt]
\draw (0pt, 0pt) circle (7pt);
\draw (7pt, 0pt)--(-3.5pt, 6.06218pt);
\draw (3.5pt, 6.06218pt)--(-3.5pt, -6.06218pt);
\draw (-7pt, 0pt)--(3.5pt, -6.06218pt);
\fill (6.06218pt, -3.5pt) circle (1pt);
\end{tikzpicture}
-
\begin{tikzpicture}[baseline=0pt]
\draw (0pt, 0pt) circle (7pt);
\draw (7pt, 0pt)--(-7pt, 0pt);
\draw (3.5pt, 6.06218pt)--(3.5pt, -6.06218pt);
\draw (-3.5pt, 6.06218pt)--(-3.5pt, -6.06218pt);
\fill (6.06218pt, -3.5pt) circle (1pt);
\end{tikzpicture}
-
\begin{tikzpicture}[baseline=0pt]
\draw (0pt, 0pt) circle (7pt);
\draw (7pt, 0pt)--(-3.5pt, -6.06218pt);
\draw (3.5pt, 6.06218pt)--(-7pt, 0pt);
\draw (-3.5pt, 6.06218pt)--(3.5pt, -6.06218pt);
\fill (6.06218pt, -3.5pt) circle (1pt);
\end{tikzpicture}
-
\begin{tikzpicture}[baseline=0pt]
\draw (0pt, 0pt) circle (7pt);
\draw (7pt, 0pt)--(-7pt, 0pt);
\draw (3.5pt, 6.06218pt)--(-3.5pt, -6.06218pt);
\draw (-3.5pt, 6.06218pt)--(3.5pt, -6.06218pt);
\fill (6.06218pt, -3.5pt) circle (1pt);
\end{tikzpicture}
~, \\
I_{3, 4} &:= \begin{tikzpicture}[baseline=0pt]
\draw (0pt, 0pt) circle (7pt);
\draw (7pt, 0pt)--(0pt, 7pt);
\draw (-7pt, 0pt)--(0pt, -7pt);
\fill (4.94975pt, -4.94975pt) circle (1pt);
\draw[font=\tiny] (10.5pt, 0pt) node {\mbox{$-$}};
\draw[font=\tiny] (-10.5pt, 0pt) node {\mbox{$-$}};
\end{tikzpicture}
-
\begin{tikzpicture}[baseline=0pt]
\draw (0pt, 0pt) circle (7pt);
\draw (7pt, 0pt)--(0pt, 7pt);
\draw (-7pt, 0pt)--(0pt, -7pt);
\fill (4.94975pt, -4.94975pt) circle (1pt);
\draw[font=\tiny] (10.5pt, 0pt) node {\mbox{$+$}};
\draw[font=\tiny] (-10.5pt, 0pt) node {\mbox{$+$}};
\end{tikzpicture}
+
\begin{tikzpicture}[baseline=0pt]
\draw (0pt, 0pt) circle (7pt);
\draw (7pt, 0pt)--(0pt, -7pt);
\draw (0pt, 7pt)--(-7pt, 0pt);
\fill (4.94975pt, -4.94975pt) circle (1pt);
\draw[font=\tiny] (10.5pt, 0pt) node {\mbox{$-$}};
\draw[font=\tiny] (0pt, 10.5pt) node {\mbox{$-$}};
\end{tikzpicture}
-
\begin{tikzpicture}[baseline=0pt]
\draw (0pt, 0pt) circle (7pt);
\draw (7pt, 0pt)--(0pt, -7pt);
\draw (0pt, 7pt)--(-7pt, 0pt);
\fill (4.94975pt, -4.94975pt) circle (1pt);
\draw[font=\tiny] (10.5pt, 0pt) node {\mbox{$+$}};
\draw[font=\tiny] (0pt, 10.5pt) node {\mbox{$+$}};
\end{tikzpicture}
+
\begin{tikzpicture}[baseline=0pt]
\draw (0pt, 0pt) circle (7pt);
\draw (7pt, 0pt)--(-7pt, 0pt);
\draw (0pt, 7pt)--(0pt, -7pt);
\fill (4.94975pt, -4.94975pt) circle (1pt);
\draw[font=\tiny] (10.5pt, 0pt) node {\mbox{$-$}};
\draw[font=\tiny] (0pt, 10.5pt) node {\mbox{$-$}};
\end{tikzpicture}
-
\begin{tikzpicture}[baseline=0pt]
\draw (0pt, 0pt) circle (7pt);
\draw (7pt, 0pt)--(-7pt, 0pt);
\draw (0pt, 7pt)--(0pt, -7pt);
\fill (4.94975pt, -4.94975pt) circle (1pt);
\draw[font=\tiny] (10.5pt, 0pt) node {\mbox{$+$}};
\draw[font=\tiny] (0pt, 10.5pt) node {\mbox{$+$}};
\end{tikzpicture}
-
\begin{tikzpicture}[baseline=0pt]
\draw (0pt, 0pt) circle (7pt);
\draw (7pt, 0pt)--(3.5pt, 6.06218pt);
\draw (-3.5pt, 6.06218pt)--(-7pt, 0pt);
\draw (-3.5pt, -6.06218pt)--(3.5pt, -6.06218pt);
\fill (6.06218pt, -3.5pt) circle (1pt);
\end{tikzpicture}
-
\begin{tikzpicture}[baseline=0pt]
\draw (0pt, 0pt) circle (7pt);
\draw (7pt, 0pt)--(3.5pt, -6.06218pt);
\draw (3.5pt, 6.06218pt)--(-3.5pt, 6.06218pt);
\draw (-7pt, 0pt)--(-3.5pt, -6.06218pt);
\fill (6.06218pt, -3.5pt) circle (1pt);
\end{tikzpicture}
-
\begin{tikzpicture}[baseline=0pt]
\draw (0pt, 0pt) circle (7pt);
\draw (7pt, 0pt)--(3.5pt, 6.06218pt);
\draw (-3.5pt, 6.06218pt)--(-3.5pt, -6.06218pt);
\draw (-7pt, 0pt)--(3.5pt, -6.06218pt);
\fill (6.06218pt, -3.5pt) circle (1pt);
\end{tikzpicture}
\\
& \phantom{=}
-
\begin{tikzpicture}[baseline=0pt]
\draw (0pt, 0pt) circle (7pt);
\draw (7pt, 0pt)--(3.5pt, -6.06218pt);
\draw (3.5pt, 6.06218pt)--(-7pt, 0pt);
\draw (-3.5pt, 6.06218pt)--(-3.5pt, -6.06218pt);
\fill (6.06218pt, -3.5pt) circle (1pt);
\end{tikzpicture}
-
\begin{tikzpicture}[baseline=0pt]
\draw (0pt, 0pt) circle (7pt);
\draw (7pt, 0pt)--(-3.5pt, 6.06218pt);
\draw (3.5pt, 6.06218pt)--(-7pt, 0pt);
\draw (-3.5pt, -6.06218pt)--(3.5pt, -6.06218pt);
\fill (6.06218pt, -3.5pt) circle (1pt);
\end{tikzpicture}
-
\begin{tikzpicture}[baseline=0pt]
\draw (0pt, 0pt) circle (7pt);
\draw (7pt, 0pt)--(-3.5pt, 6.06218pt);
\draw (3.5pt, 6.06218pt)--(3.5pt, -6.06218pt);
\draw (-7pt, 0pt)--(-3.5pt, -6.06218pt);
\fill (6.06218pt, -3.5pt) circle (1pt);
\end{tikzpicture}
-
\begin{tikzpicture}[baseline=0pt]
\draw (0pt, 0pt) circle (7pt);
\draw (7pt, 0pt)--(-3.5pt, -6.06218pt);
\draw (3.5pt, 6.06218pt)--(3.5pt, -6.06218pt);
\draw (-3.5pt, 6.06218pt)--(-7pt, 0pt);
\fill (6.06218pt, -3.5pt) circle (1pt);
\end{tikzpicture}
-
\begin{tikzpicture}[baseline=0pt]
\draw (0pt, 0pt) circle (7pt);
\draw (7pt, 0pt)--(-3.5pt, -6.06218pt);
\draw (3.5pt, 6.06218pt)--(-3.5pt, 6.06218pt);
\draw (-7pt, 0pt)--(3.5pt, -6.06218pt);
\fill (6.06218pt, -3.5pt) circle (1pt);
\end{tikzpicture}
-
\begin{tikzpicture}[baseline=0pt]
\draw (0pt, 0pt) circle (7pt);
\draw (7pt, 0pt)--(3.5pt, 6.06218pt);
\draw (-3.5pt, 6.06218pt)--(3.5pt, -6.06218pt);
\draw (-7pt, 0pt)--(-3.5pt, -6.06218pt);
\fill (6.06218pt, -3.5pt) circle (1pt);
\end{tikzpicture}
-
\begin{tikzpicture}[baseline=0pt]
\draw (0pt, 0pt) circle (7pt);
\draw (7pt, 0pt)--(3.5pt, -6.06218pt);
\draw (3.5pt, 6.06218pt)--(-3.5pt, -6.06218pt);
\draw (-3.5pt, 6.06218pt)--(-7pt, 0pt);
\fill (6.06218pt, -3.5pt) circle (1pt);
\end{tikzpicture}
-
\begin{tikzpicture}[baseline=0pt]
\draw (0pt, 0pt) circle (7pt);
\draw (7pt, 0pt)--(-7pt, 0pt);
\draw (3.5pt, 6.06218pt)--(-3.5pt, 6.06218pt);
\draw (-3.5pt, -6.06218pt)--(3.5pt, -6.06218pt);
\fill (6.06218pt, -3.5pt) circle (1pt);
\end{tikzpicture}
-
\begin{tikzpicture}[baseline=0pt]
\draw (0pt, 0pt) circle (7pt);
\draw (7pt, 0pt)--(-3.5pt, 6.06218pt);
\draw (3.5pt, 6.06218pt)--(-3.5pt, -6.06218pt);
\draw (-7pt, 0pt)--(3.5pt, -6.06218pt);
\fill (6.06218pt, -3.5pt) circle (1pt);
\end{tikzpicture}
-
\begin{tikzpicture}[baseline=0pt]
\draw (0pt, 0pt) circle (7pt);
\draw (7pt, 0pt)--(-7pt, 0pt);
\draw (3.5pt, 6.06218pt)--(3.5pt, -6.06218pt);
\draw (-3.5pt, 6.06218pt)--(-3.5pt, -6.06218pt);
\fill (6.06218pt, -3.5pt) circle (1pt);
\end{tikzpicture}
-
\begin{tikzpicture}[baseline=0pt]
\draw (0pt, 0pt) circle (7pt);
\draw (7pt, 0pt)--(-3.5pt, -6.06218pt);
\draw (3.5pt, 6.06218pt)--(-7pt, 0pt);
\draw (-3.5pt, 6.06218pt)--(3.5pt, -6.06218pt);
\fill (6.06218pt, -3.5pt) circle (1pt);
\end{tikzpicture}
-
\begin{tikzpicture}[baseline=0pt]
\draw (0pt, 0pt) circle (7pt);
\draw (7pt, 0pt)--(-7pt, 0pt);
\draw (3.5pt, 6.06218pt)--(-3.5pt, -6.06218pt);
\draw (-3.5pt, 6.06218pt)--(3.5pt, -6.06218pt);
\fill (6.06218pt, -3.5pt) circle (1pt);
\end{tikzpicture}
~, \\
I_{3, 5} &:= \begin{tikzpicture}[baseline=0pt]
\draw (0pt, 0pt) circle (7pt);
\draw (7pt, 0pt)--(0pt, 7pt);
\draw (-7pt, 0pt)--(0pt, -7pt);
\fill (4.94975pt, -4.94975pt) circle (1pt);
\draw[font=\tiny] (10.5pt, 0pt) node {\mbox{$-$}};
\draw[font=\tiny] (-10.5pt, 0pt) node {\mbox{$-$}};
\end{tikzpicture}
-
\begin{tikzpicture}[baseline=0pt]
\draw (0pt, 0pt) circle (7pt);
\draw (7pt, 0pt)--(0pt, 7pt);
\draw (-7pt, 0pt)--(0pt, -7pt);
\fill (4.94975pt, -4.94975pt) circle (1pt);
\draw[font=\tiny] (10.5pt, 0pt) node {\mbox{$+$}};
\draw[font=\tiny] (-10.5pt, 0pt) node {\mbox{$+$}};
\end{tikzpicture}
-
\begin{tikzpicture}[baseline=0pt]
\draw (0pt, 0pt) circle (7pt);
\draw (7pt, 0pt)--(3.5pt, 6.06218pt);
\draw (-3.5pt, 6.06218pt)--(-7pt, 0pt);
\draw (-3.5pt, -6.06218pt)--(3.5pt, -6.06218pt);
\fill (6.06218pt, -3.5pt) circle (1pt);
\end{tikzpicture}
-
\begin{tikzpicture}[baseline=0pt]
\draw (0pt, 0pt) circle (7pt);
\draw (7pt, 0pt)--(3.5pt, 6.06218pt);
\draw (-3.5pt, 6.06218pt)--(-3.5pt, -6.06218pt);
\draw (-7pt, 0pt)--(3.5pt, -6.06218pt);
\fill (6.06218pt, -3.5pt) circle (1pt);
\end{tikzpicture}
-
\begin{tikzpicture}[baseline=0pt]
\draw (0pt, 0pt) circle (7pt);
\draw (7pt, 0pt)--(-3.5pt, 6.06218pt);
\draw (3.5pt, 6.06218pt)--(-7pt, 0pt);
\draw (-3.5pt, -6.06218pt)--(3.5pt, -6.06218pt);
\fill (6.06218pt, -3.5pt) circle (1pt);
\end{tikzpicture}
+
\begin{tikzpicture}[baseline=0pt]
\draw (0pt, 0pt) circle (7pt);
\draw (7pt, 0pt)--(-3.5pt, -6.06218pt);
\draw (3.5pt, 6.06218pt)--(3.5pt, -6.06218pt);
\draw (-3.5pt, 6.06218pt)--(-7pt, 0pt);
\fill (6.06218pt, -3.5pt) circle (1pt);
\end{tikzpicture}
-
\begin{tikzpicture}[baseline=0pt]
\draw (0pt, 0pt) circle (7pt);
\draw (7pt, 0pt)--(3.5pt, 6.06218pt);
\draw (-3.5pt, 6.06218pt)--(3.5pt, -6.06218pt);
\draw (-7pt, 0pt)--(-3.5pt, -6.06218pt);
\fill (6.06218pt, -3.5pt) circle (1pt);
\end{tikzpicture}
-
\begin{tikzpicture}[baseline=0pt]
\draw (0pt, 0pt) circle (7pt);
\draw (7pt, 0pt)--(-7pt, 0pt);
\draw (3.5pt, 6.06218pt)--(-3.5pt, 6.06218pt);
\draw (-3.5pt, -6.06218pt)--(3.5pt, -6.06218pt);
\fill (6.06218pt, -3.5pt) circle (1pt);
\end{tikzpicture}
+
\begin{tikzpicture}[baseline=0pt]
\draw (0pt, 0pt) circle (7pt);
\draw (7pt, 0pt)--(-3.5pt, 6.06218pt);
\draw (3.5pt, 6.06218pt)--(-3.5pt, -6.06218pt);
\draw (-7pt, 0pt)--(3.5pt, -6.06218pt);
\fill (6.06218pt, -3.5pt) circle (1pt);
\end{tikzpicture}
+
\begin{tikzpicture}[baseline=0pt]
\draw (0pt, 0pt) circle (7pt);
\draw (7pt, 0pt)--(-7pt, 0pt);
\draw (3.5pt, 6.06218pt)--(-3.5pt, -6.06218pt);
\draw (-3.5pt, 6.06218pt)--(3.5pt, -6.06218pt);
\fill (6.06218pt, -3.5pt) circle (1pt);
\end{tikzpicture}
~, \\
I_{2, 1} &:= \begin{tikzpicture}[baseline=0pt]
\draw (0pt, 0pt) circle (7pt);
\draw (7pt, 0pt)--(0pt, 7pt);
\draw (-7pt, 0pt)--(0pt, -7pt);
\fill (4.94975pt, -4.94975pt) circle (1pt);
\end{tikzpicture}
-
\begin{tikzpicture}[baseline=0pt]
\draw (0pt, 0pt) circle (7pt);
\draw (7pt, 0pt)--(-7pt, 0pt);
\draw (0pt, 7pt)--(0pt, -7pt);
\fill (4.94975pt, -4.94975pt) circle (1pt);
\end{tikzpicture}
~.  
\end{align*}

Finally, we would like to mention Proposition~\ref{classical}.  
\begin{proposition}\label{classical}
$I_{2, 1}$ is the Arnold invariant of long curve.  
\end{proposition}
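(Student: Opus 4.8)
The plan is to recognize $I_{2,1}$ as one of the known Gauss-diagram formulas for the Arnold invariant of long curves. Since $I_{2,1}$ is, by construction, a signed count of sub-chord-diagrams of the based Gauss diagram of $C$, proving the proposition amounts to matching this expression, term by term and sign by sign, with the Gauss-diagram formula for $\dot{J}^+$ due to Gusein-Zade--Natanzon \cite{Gusein-ZadeNatanzon1997} and Zhou--Zou--Pan \cite{ZhouZouPan1998}. I would not reprove invariance: the general framework inherited from \cite{ItoTakamura2021} already guarantees that $I_{2,1}$ is invariant under iR2-move and R3-move, so the only content left is the pointwise equality $I_{2,1}(C) = \dot{J}^+(C)$, an identity between two concrete functions of the plane curve.

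First I would undo the arrow-to-chord switching of Fig.~\ref{switch} and read the two terms of $I_{2,1}$ in the original arrow (Gauss-diagram) language, recording the sign that each switched chord carries. Reading the circle counterclockwise from the marked base point, the first diagram is the non-interlaced pair of chords with endpoint pattern $a\,a\,b\,b$, while the second is the interlaced (linked) pair with pattern $a\,b\,a\,b$; the base point sits on the arc following the last endpoint in both cases. Thus $I_{2,1}$ is the difference (count of non-interlaced pairs) minus (count of interlaced pairs) of chords in the based Gauss diagram. The next step is to observe that, once the base point turns the circle into a line, this signed difference is precisely the interlacement/linking count that enters the cited formula for $\dot{J}^+$ of a long curve.

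Then I would match coefficients and signs with the formula of \cite{Gusein-ZadeNatanzon1997, ZhouZouPan1998}. Because both sides are functions of the same based Gauss diagram and the two expressions agree after this identification, it follows that $I_{2,1}(C)=\dot{J}^+(C)$ for every plane curve $C$, identified with its long curve as in \cite[Definition~4.3]{Ito2010}.

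The hard part will be reconciling the conventions across the sources: the arrow-to-chord switch of Fig.~\ref{switch}, the direction in which one reads around the circle from the base point, and the placement of the base arc all affect the overall sign, while the cited formulas are stated with their own sign and normalization conventions. If the version I quote is phrased for closed curves, I would additionally restrict it to the based setting and check that the additive normalization (the value on the trivial long curve) agrees. As an independent cross-check --- and an alternative proof should the bookkeeping prove delicate --- I would verify directly that $I_{2,1}$ changes by the expected amount under a direct self-tangency and is unchanged under the moves under which $\dot{J}^+$ is invariant, and then invoke the uniqueness characterization of the Arnold invariant to conclude $I_{2,1}=\dot{J}^+$.
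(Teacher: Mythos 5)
Your proposal takes essentially the same route as the paper: the paper's one-line proof is exactly a term-by-term identification of $I_{2,1}$ with a known chord-diagram formula for the Arnold invariant of long curves, citing \cite[Proposition~4.6 (4.6)]{Ito2010} together with \cite[Lemma~4.5 (4.4)]{Ito2010} where you would cite the equivalent formulations of Gusein-Zade--Natanzon \cite{Gusein-ZadeNatanzon1997} and Zhou--Zou--Pan \cite{ZhouZouPan1998}. The only caution is that the paper's matching yields $I_{2,1} = \frac{J^+ + r^2}{2}$ up to signs rather than the literal pointwise equality $I_{2,1}(C)=\dot{J}^+(C)$ you state as the goal, so the convention/normalization reconciliation you flag as ``the hard part'' is precisely where your identity must be weakened to agreement up to sign and a normalization involving the rotation number.
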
  
\begin{proof}
For Arnold $J^+$ invariant of long curves, the known formula \cite[Proposition~4.6 (4.6)]{Ito2010} plus \cite[Lemma~4.5 (4.4)]{Ito2010} implies that $I_{2, 1}$ equals $\frac{J^+ + r^2}{2}$ up to signs.   
\end{proof}
\section*{Acknowledgements}
The authors would like to thank Professor Takashi Inaba for his comments.   NI would like to thank Sara Yamaguchi for giving me her electronic data of figures of this paper.      
The work of NI was partially supported by MEXT KAKENHI Grant Number 20K03604.

\bibliographystyle{plain}
\bibliography{Ref}
\end{document}